
\documentclass[letterpaper, 10 pt, conference]{ieeeconf}  

\IEEEoverridecommandlockouts                              
\overrideIEEEmargins

\usepackage{graphicx} 
\usepackage{times} 

\usepackage{amsfonts,amsmath,amssymb} 

\usepackage{array}
\usepackage{subfigure}

\graphicspath{{Figures/}}

\usepackage[utf8]{inputenc}
\usepackage[T1]{fontenc}
\usepackage[english]{babel} 
\usepackage{cite}

\usepackage{subfigure}

\interdisplaylinepenalty=0

\usepackage{amsmath}
\interdisplaylinepenalty=2500
\usepackage{amssymb}
\usepackage{eucal}

\newtheorem{thm}{Theorem}
\newtheorem{lem}{Lemma}

\providecommand{\abs}[1]{\left\lvert#1\right\rvert}

\newcommand{\R}{\mathbb R}

\newcommand{\N}{\mathbb N}

\newcommand{\be}{\begin{equation}}
\newcommand{\ee}{\end{equation}}

\newcommand{\bes}{\begin{equation*}}
\newcommand{\ees}{\end{equation*}}

\newcommand{\ba}{\begin{eqnarray}}
\newcommand{\ea}{\end{eqnarray}}

\newcommand{\ep}{\varepsilon}



\title{\LARGE \bf Null Controllability of the 2D Heat Equation Using Flatness}

\author{Philippe Martin, Lionel Rosier and Pierre Rouchon
\thanks{P.~Martin and P.~Rouchon are with the Centre Automatique et Systèmes, MINES ParisTech, 75006~Paris,~France
{\tt\footnotesize \{philippe.martin, pierre.rouchon\}@mines-paristech.fr}}%
\thanks{L.~Rosier is with Institut Élie Cartan, UMR 7502 UdL/CNRS/INRIA, BP 70239, 54506 Vandœuvre-lès-Nancy, France
{\tt\footnotesize Lionel.Rosier@univ-lorraine.fr}}%
}

\begin{document}

\maketitle
\thispagestyle{empty}
\pagestyle{empty}

\begin{abstract}
We derive in a direct and rather straightforward way the null controllability of a 2-D heat equation with boundary control.
We use the so-called {\em flatness approach}, which consists in parameterizing the solution and the control by the derivatives of a ``flat output''. This provides an explicit control law achieving the exact steering to zero. Numerical experiments  demonstrate the relevance of the approach.
\end{abstract}



\section{Introduction}
%
The controllability of the heat equation was first considered in the 1-D case, \cite{FattoR1971ARMA,Jones1977JMAA,Littm1978ASNSPCS}), and very precise results were obtained by
the classical moment approach.  Next using Carleman estimates and duality arguments the null controllability was proved in \cite{FursiI1996book,LebeaR1995CPDE} for any bounded domain in~$\R^N$, any control time $T$, and any control region. This Carleman approach proves very efficient also with semilinear parabolic equations, \cite{FursiI1996book}.

By contrast the numerical control of the heat equation (or of parabolic equations) is in its early stage, see e.g.~\cite{MunchZ2010IP,BoyerHL2011NM,MicuZ2011SCL}. A natural candidate for the control input is the control of minimal $L^2-$norm, which may be obtained as a trace of the solution of the (backward) adjoint problem whose terminal state is the minimizer of a suitable quadratic cost. Unfortunately its computation is a hard task~\cite{MicuZ2011SCL}; indeed the terminal state of the adjoint problem associated with some regular initial state of the control problem may be highly irregular, which leads to severe troubles in the numerical computation of the control function.

All the above results rely on some observability inequalities for the adjoint system. A direct approach which does not involve the adjoint problem was proposed in~\cite{Jones1977JMAA,Littm1978ASNSPCS,LinL1995AMO}.  In \cite{Jones1977JMAA} a fundamental solution for the heat equation with compact support in time was introduced and used to prove null controllability. The results in \cite{Jones1977JMAA,Rosie2002CAM} can be used to derive control results on a bounded interval with one boundary control in some Gevrey class. An extension of those results to the semilinear heat equation in 1D was obtained in \cite{LinL1995AMO} in a more explicit way through the resolution of an ill-posed problem with data of Gevrey order 2 in~$t$.

In this paper we derive in a straightforward way the null controllability of the 2-D heat equation
\begin{IEEEeqnarray}{rCl}
    \theta_t(t,x_1,x_2) &=&\Delta\theta(t,x_1,x_2)\label{eq:heat}\\
    \theta_{x_1}(t,0,x_2) &=&0\label{eq:bc1}\\
    \theta_{x_1}(t,L,x_2) &=&0\label{eq:bc2}\\
    \theta_{x_2}(t,x_1,0) &=&0\label{eq:bc3}\\
    \theta_{x_2}(t,x_1,1) &=&u(t,x_1)\label{eq:bcu}
\end{IEEEeqnarray}
with initial condition
\begin{IEEEeqnarray*}{rCl}
    \theta(0,x_1,x_2) &=&\theta_0(x_1,x_2);\label{eq:ic}
\end{IEEEeqnarray*}
here $(x_1,x_2)$ belongs to the rectangle $\Omega:=(0,L)\times(0,1)$, $t\in(0,T)$, and $\theta_0\in L^2(\Omega)$.
This system describes the dynamics of the temperature~$\theta$ in an insulated metal strip where the control~$u$
is the heat flux at one end.

More precisely given any final time~$T>0$ and any initial state $\theta_0\in L^2(\Omega)$
we provide an explicit very regular control input~$u$ such that the state reached at time~$T$ is zero, i.e.
\begin{IEEEeqnarray*}{rCl'l}
    \theta(T,x_1,x_2) &=&0, &x\in\Omega.
\end{IEEEeqnarray*}
We use the so-called {\em flatness approach}, \cite{FliesLMR1995IJoC}, which consists in parameterizing the solution~$\theta$ and the control~$u$ by the derivatives of a ``flat output''~$y$ (section~\ref{sec:flatness}); this notion was initially introduced for finite-dimensional (nonlinear) systems, and later extended to (in particular) parabolic PDEs, \cite{LarocMR2000IJRNC,LynchR2002IJC,MeureZ2008MCMDS,Meure2011A}. Choosing a suitable trajectory for this flat output~$y$ then yields an explicit series for a control achieving the exact steering to zero (section~\ref{sec:controllability}). Numerical experiments  demonstrate the relevance of the approach (section~\ref{sec:numerics}).

This paper extends~\cite{MartiRR2013arxiv} to the 2-D case, with moreover a more elegant and efficient construction of the control. This is probably the first example using flatness for motion planning of a ``truly'' 2-D PDE.

In the sequel we will consider series with infinitely many derivatives of some functions. The notion of Gevrey order is a way of estimating the growth of these derivatives: we say that a function $y\in C^\infty([0,T])$ is {\em Gevrey of order $s\geq0$ on~$[0,T]$} if there exist positive constants~$M,R$ such that
\bes
\abs{y^{(p)}(t)} \leq M\frac{p!^s}{R^p} \qquad \forall t\in [0,T],\ \forall p\ge 0.
\ees
More generally if $K\subset\R^N$ is a compact set and $y$ is a function of class~$C^\infty$ on~$K$ (i.e. $y$ is the restriction to $K$ of a function of class~$C^\infty$ on some open neighbourhood $\Omega$ of~$K$), we say $y$ is {\em Gevrey of order $s_1$ in $x_1$, $s_2$ in $x_2$,\ldots,$s_N$ in $x_N$ on~$K$} if there exist positive constants $M,R_1,...,R_N$ such that
for all $x\in K$ and $p\in\N^N$
\bes
\abs{\partial_{x_1}^{p_1}\partial_{x_2}^{p_2}\cdots\partial_{x_N}^{p_N}y(x)} \le
M\frac{\prod_{i=1}^N (p_i !)^{s_i}}{\prod_{i=1}^N R_i^{p_i} }.
\ees
By definition, a Gevrey function of order $s$ is also of order $r$ for~$r\geq s$. Gevrey functions of order~1 are analytic (entire if $s<1$).  Gevrey functions of order~$s>1$ have a divergent Taylor expansion; the larger~$s$, the ``more divergent'' the Taylor expansion. Important properties of analytic functions generalize to Gevrey functions of order $s>1$: the scaling, addition, multiplication and derivation of Gevrey functions of order $s>1$ is of order~$s$, see~\cite{Ramis1978,Rudin1987book}.  But contrary to analytic functions, functions of order $s>1$ may be constant on an open set without being constant everywhere.  For example the ``step function''
\bes
\phi_s(t):=\begin{cases}
1 & \text{if $t\leq0$}\\
0 & \text{if $t\geq1$}\\
\dfrac{ e^{-(1-t)^{-k}} }{ e^{-(1-t)^{-k}} + e^{-t^{-k}} }
&\text{if $t\in]0,1[$},
\end{cases}
\ees
where $k = (s-1)^{-1}$, is Gevrey of order~$s$ on $[0,1]$ (and in fact on~$\R$); notice $\phi_s(0)=1$, $\phi_s(1)=0$ and $\phi_s^{(i)}(0)=\phi_s^{(i)}(1)=0$ for all~$i\geq1$.

In conjunction with growth estimates we will repeatedly use Stirling's formula $n!\sim(n/e)^n\sqrt{2\pi n}$.

\section{The heat equation is ``flat''}\label{sec:flatness}
Let $(e_j)_{j\ge 0}$ be an orthonormal basis of~$L^2(0,L)$ such that each function $e_j$ is an eigenfunction for the Neumann Laplacian on $(0,L)$, ie is a solution of
\begin{IEEEeqnarray*}{rCl'l}
    -e_j''(x_1) &=&\lambda _j e_j(x_1), &x_1\in(0,L)\\
    e_j'(0) &=&0\\
    e_j'(L) &=&0.
\end{IEEEeqnarray*}
For instance $\lambda_j=\Bigl(\frac{j\pi}{L}\Bigr)^2$ and
\begin{IEEEeqnarray*}{rCl'l}
    e_0(x_1) &=& L^{-\frac{1}{2}}\\
    e_j(x_1) &=&\sqrt\frac{2}{L}\cos\Bigl(\frac{j\pi x_1}{L}\Bigr), &j\geq1.
\end{IEEEeqnarray*}
Decompose then $\theta(t,x_1,0)$ as
\begin{IEEEeqnarray*}{rCl}
    \theta(t,x_1,0) &=&\sum_{j\geq0}z_j(t)e_j(x_1).
\end{IEEEeqnarray*}
We claim~\eqref{eq:heat}--\eqref{eq:bcu} is ``flat'' with $z(t):=\bigl(z_j(t)\bigr)_{j\geq0}$ as a flat output, which means there is (in appropriate spaces of smooth functions) a $1-1$ correspondence between arbitrary functions $t\mapsto z(t)$ and solutions of~\eqref{eq:heat}--\eqref{eq:bcu}.

On the one hand a solution~$\theta$ of~\eqref{eq:heat}--\eqref{eq:bcu} uniquely defines the $z_k$'s hence $z$ by
\begin{IEEEeqnarray*}{rCl}
    \int_0^L\theta(t,x_1,0)e_k(x_1)dx_1 &=&\sum_{j\geq0}z_j(t)\int_0^Le_j(x_1)e_k(x_1)dx_1\\
    &=& z_k(t).
\end{IEEEeqnarray*}
On the other hand let us seek a formal solution in the form
\begin{IEEEeqnarray*}{rCl}
    \theta(t,x_1,x_2) &=&\sum_{i\geq0}\frac{x_2^i}{i!}a_i(t,x_1),
\end{IEEEeqnarray*}
where the $a_i$'s are functions yet to define. Plugging this formal solution into~\eqref{eq:heat} we find
\begin{IEEEeqnarray*}{rCl}
    \sum_{i\geq0}\frac{x_2^i}{i!}\bigl[a_{i+2}(t,x_1)-(\partial_t-\partial^2_{x_1})a_i(t,x_1)\bigr] &=&0,
\end{IEEEeqnarray*}
hence $a_{i+2}=(\partial_t-\partial^2_{x_1})a_i$ for all~$i\geq0$. Moreover $a_0(t,x_1)=\theta(t,x_1,0)$ and by~\eqref{eq:bc3} $a_1(t,x_1)=0$. Therefore for all~$i\geq0$
\begin{IEEEeqnarray*}{rCl}
    a_{2i+1}(t,x_1) &=& 0\\
    a_{2i}(t,x_1) &=& (\partial_t-\partial^2_{x_1})^ia_0\\
    &=& \sum_{j\geq0}(\partial_t-\partial^2_{x_1})^iz_j(t)e_j(x_1)\\
    &=& \sum_{j\geq0}e_j(x_1)(\partial_t+\lambda_j)^iz_j(t)\\
    &=& \sum_{j\geq0}e_j(x_1)e^{-\lambda_jt}y_j^{(i)}(t),
\end{IEEEeqnarray*}
where we have set $y_j(t):= e^{\lambda _j t} z_j(t)$. Clearly
\begin{IEEEeqnarray}{rCl}
    \theta(t,x_1,x_2) &=&\sum_{j\geq0}e^{-\lambda_jt}e_j(x_1)
    \sum_{i\ge0}y_j^{(i)}(t)\frac{x_2^{2i}}{(2i)!}\label{eq:theta}\\
    u(t,x_1) &=& \sum_{j\geq0}e^{-\lambda_jt}e_j(x_1)
    \sum_{i\ge0}\frac{1}{(2i-1)!}y_j^{(i)}(t). \label{eq:u}
\end{IEEEeqnarray}
is then a formal solution of~\eqref{eq:heat}--\eqref{eq:bcu} uniquely defined by the~$y_j$'s hence by~$z$.

We now give a precise meaning to this formal solution by restricting the~$y_j$'s to be Gevrey of order~$s\in [1,2)$.
\begin{thm}\label{prop:regularity}
Let $s\in [1,2)$, $ 0 < t_1<t_2<\infty$, and a sequence $y=(y_j)_{j\ge 0}$ in $C^\infty ([0,T])$
satisfying for some constants $M,R >0$
\begin{IEEEeqnarray*}{rCl'l}
\abs{y_j^{(i)}(t)} &\leq& M\frac{i!^s}{R^i},  &\forall i,j\ge 0,\ \forall t\in[t_1,t_2].
\end{IEEEeqnarray*}
Then the formal solution~$\theta$ defined by~\eqref{eq:theta} is Gevrey of order~$s$ in~$t$, $1/2$ in~$x_1$, and~$s/2$ in~$x_2$ on~$[t_1,t_2]\times\overline{\Omega}$; as a consequence the formal control~$u$ defined by~\eqref{eq:u} is also Gevrey of order~$s$ in~$t$ and $1/2$ in~$x_1$ on~$[t_1,t_2]\times[0,L]$.
\end{thm}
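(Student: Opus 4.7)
The plan is to bound $\partial_t^p\partial_{x_1}^q\partial_{x_2}^r\theta$ pointwise on $[t_1,t_2]\times\overline{\Omega}$ by differentiating~\eqref{eq:theta} term by term and reading off the three Gevrey exponents separately. Applying Leibniz,
\[
\partial_t^p\bigl[e^{-\lambda_j t}\,y_j^{(i)}(t)\bigr]=\sum_{k=0}^p\binom{p}{k}(-\lambda_j)^k e^{-\lambda_j t}\,y_j^{(i+p-k)}(t),
\]
and using $|e_j^{(q)}(x_1)|\leq\sqrt{2/L}\,(j\pi/L)^q$ together with the hypothesis on $y_j$, the problem reduces to estimating a triple sum indexed by $j\geq 0$, $k\in\{0,\dots,p\}$ and $i\geq\lceil r/2\rceil$, which I would carry out in that order.

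For the $j$-sum, comparison with a Gaussian integral gives
\[
\sum_{j\geq 0}(j\pi/L)^{q+2k}e^{-\lambda_j t_1}\leq \frac{C}{t_1^{(q+2k+1)/2}}\,\Gamma\!\Bigl(\tfrac{q+2k+1}{2}\Bigr),
\]
and Stirling yields $\Gamma((n+1)/2)\leq C'\sqrt{n!}$. Splitting $(q+2k)!\leq 2^{q+2k}q!(2k)!$ and $(2k)!\leq 4^k(k!)^2$ rewrites the whole $j$-sum as $\leq C''\sqrt{q!}\,(2/\sqrt{t_1})^q(4/t_1)^k k!$, and the $\sqrt{q!}$ is exactly the Gevrey order $1/2$ in $x_1$. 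The $k$-sum is then absorbed using $\binom{p}{k}k!=p!/(p-k)!$, while the growth of $y_j^{(i+p-k)}$ is tamed by $(i+p-k)!^s\leq 2^{s(i+p-k)}\,i!^s\,(p-k)!^s$ combined with $p!(p-k)!^{s-1}\leq p!^s$ (valid because $s\geq 1$); this produces the expected $p!^s/R_1^p$ factor (Gevrey order $s$ in $t$) and leaves behind a finite geometric sum $\sum_{k=0}^p c^k$ with $c$ a fixed constant.

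The main obstacle is the remaining sum in $i$,
\[
S_r := \sum_{i\geq\lceil r/2\rceil} \frac{i!^s\,\rho^i}{(2i-r)!},
\]
which must be shown to satisfy $S_r\leq C_2\,r!^{s/2}/R_3^r$ with constants independent of $r$. This is where $s<2$ is used in an essential way. Setting $n=i-\lceil r/2\rceil$, using the central binomial estimate $(2n)!\geq 2^n(n!)^2$ and the chain $i!\leq i^{r/2}(i-r/2)!\leq (r/2)^{r/2}e^{n}\,n!$ (which follows from $(1+2n/r)^{r/2}\leq e^{n}$), one bounds the summand by $(r/2)^{rs/2}\rho^{r/2}\cdot (e^s\rho/2)^n/(n!)^{2-s}$. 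Since $2-s>0$ the $n$-series converges to a constant depending only on $\rho$ and $s$, and Stirling together with $(r/2)!\leq\sqrt{r!}$ yields $(r/2)^{rs/2}\leq C\,r!^{s/2}(e/2)^{rs/2}$, the extra geometric factor being absorbed into $R_3$.

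Assembling the three estimates gives
\[
|\partial_t^p\partial_{x_1}^q\partial_{x_2}^r\theta(t,x_1,x_2)|\leq M'\,\frac{p!^s\,q!^{1/2}\,r!^{s/2}}{R_1^p R_2^q R_3^r}
\]
uniformly on $[t_1,t_2]\times\overline{\Omega}$, which is the claimed regularity of $\theta$; normal convergence of every formal derivative series, which the above bounds provide, retroactively justifies the termwise differentiation. The statement for $u$ is then essentially a corollary: formally $u(t,x_1)=\partial_{x_2}\theta(t,x_1,1)$, so it is obtained by specializing the estimate to $r=1$ and $x_2=1$, which gives the Gevrey orders $s$ in $t$ and $1/2$ in $x_1$ at no extra cost.
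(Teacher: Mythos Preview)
Your argument is correct and follows essentially the same strategy as the paper: termwise differentiation, Leibniz in~$t$, the splittings $(a+b)!\le 2^{a+b}a!\,b!$ and $p!(p-k)!^{s-1}\le p!^s$, and the crucial use of $s<2$ for convergence of the $i$-sum. The only tactical differences are that the paper handles the $j$-sum by writing $e^{-\lambda_j t_1}=e^{-\lambda_j t_0}e^{-\lambda_j\delta}$ and invoking $x^pe^{-x}\le p!$ (instead of your Gaussian-integral comparison), and estimates the $i$-sum via $i!^s\sim(2i)!^{s/2}(\pi i)^{s/4}/2^{is}$ (instead of your substitution $n=i-\lceil r/2\rceil$ and $(m+n)!\le m^me^n n!$); both routes are equivalent in spirit and in outcome.
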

\begin{proof}
We must prove the formal series
\begin{IEEEeqnarray*}{rCl}
    \partial^m_t\partial^{2l}_{x_1}\partial^n_{x_2}\theta(t,x_1,x_2)
    &=&
    \sum_{2i\geq n}\sum_{j\geq0}E_{i,j},
\end{IEEEeqnarray*}
where
\begin{IEEEeqnarray*}{rCl}
    E_{i,j} &:=& \frac{x_2^{2i-n}}{(2i-n)!}(-\lambda_j)^le_j(x_1)
    \partial_t^m\bigl(e^{-\lambda_jt}y_j^{(i)}(t) \bigr),
\end{IEEEeqnarray*}
are uniformly convergent  on $[t_1,t_2]\times \overline{\Omega}$ with growth estimates of the form
\begin{IEEEeqnarray*}{rCl}
    \abs{\partial^m_t\partial^{2l}_{x_1}\partial^n_{x_2}\theta(t,x_1,x_2)}
    &\leq&
    C\frac{m!^s}{R_0^m}\,\frac{(2l)!^\frac{1}{2}}{R_1^{2l}}\,\frac{n!^\frac{s}{2}}{R_2^n}
\end{IEEEeqnarray*}
for some positive constants $R_0,R_1,R_2$. In what follows $C$ denotes a generic constant independent of $m,l,n,t,x_1,x_2$ that may vary from line to line. By assumption
\begin{IEEEeqnarray*}{rCl}
    \abs{E_{i,j}} &=& \abs{ \frac{x_2^{2i-n}}{(2i-n)!}e_j(x_1)
    \sum_{k=0}^m\binom{m}{k} y_j^{(i+k)}(t) \lambda_j^{l+m-k}e^{-\lambda_jt} }\\
    &\le& \frac{M\sqrt\frac{2}{L}}{(2i-n)!}
    \sum_{k=0}^m \frac{m!}{k! (m-k) !}\frac{(i+k)!^s}{R^{i+k}} \lambda_j^{l+m-k}e^{-\lambda_jt_1}.
\end{IEEEeqnarray*}

Let $t_0\in(0,t_1)$ and $\delta:=t_1-t_0$. Using $(p+q)! \le 2^{p+q}p!q!$ and $x^pe^{-x}\le p!$ for $x\ge0$
\begin{IEEEeqnarray*}{rClCl}
    \frac{(i+k)!^s}{k!R^{i+k}}
    &\leq& \frac{i!^sk!^s2^{s(i+k})}{k!R^{i+k}}
    &\leq& i!^sm!^{s-1}\Bigl(\frac{2^s}{R}\Bigr)^{i+k}\\
    \lambda_j^{l+m-k}e^{-\lambda_j\delta}
    &\leq& \frac{(l+m-k)!}{t_1^{l+m-k}}
    &\leq& l!(m-k)!\Bigl(\frac{2}{\delta}\Bigr)^{l+m-k},
\end{IEEEeqnarray*}
so we have
\begin{IEEEeqnarray*}{rCl}
    \abs{E_{i,j}} &\le& e^{-\lambda_jt_0}M\sqrt\frac{2}{L}\, \frac{i!^s\Bigl(\frac{2^s}{R}\Bigr)^i}{(2i-n)!}\,
    l!m!^s\Bigl(\frac{2}{\delta}\Bigr)^{l+m} \sum_{k=0}^m \Bigl(\frac{2^s\delta}{2R}\Bigr)^k.
\end{IEEEeqnarray*}

Now by Stirling's formula
\begin{IEEEeqnarray*}{rClCl}
    l!\Bigl(\frac{2}{t_1}\Bigr)^l &\sim& (2l)!^\frac{1}{2}\frac{(\pi l)^\frac{1}{4}}{\delta^l}
    &\leq& C \frac{(2l)!^\frac{1}{2}}{R_1^{2l}},
\end{IEEEeqnarray*}
where $R_1<\sqrt{\delta}$. Likewise using also $(p+q)! \le 2^{p+q}p!q!$
\begin{IEEEeqnarray*}{rCl}
    \frac{i!^s\Bigl(\frac{2^s}{R}\Bigr)^i}{(2i-n)!}
    &\sim& \Bigl(\frac{2^s}{R}\Bigr)^i \frac{(2i)!^\frac{1}{2}}{(2i-n)^\frac{1}{2}!}
    \frac{(\pi i)^\frac{s}{4}}{(2i-n)!^{1-\frac{s}{2}}2^{is}}\\
    &\leq& \Bigl(\frac{2^s}{R}\Bigr)^i (\pi i)^\frac{s}{4} \frac{n!^\frac{s}{2}}{(2i-n)!^{1-\frac{s}{2}}}\\
    &\leq& \Bigl(\frac{2^s}{R}\Bigr)^i \Bigl(\frac{n\pi}{2}\Bigr)^\frac{s}{4}
    \biggl(1+\Bigl(\frac{2i-n}{n}\Bigr)^\frac{s}{4}\biggr) \frac{n!^\frac{s}{2}}{(2i-n)!^{1-\frac{s}{2}}}\\
    &\leq& C \frac{n!^\frac{s}{2}}{R_2^n}
    \underbrace{ \frac{1+\Bigl(\frac{2i-n}{n}\Bigr)^\frac{s}{4}}{(2i-n)!^{1-\frac{s}{2}}}
    \sqrt{\frac{2^s}{R}}^{2i-n} }_{A_{2i-n}},
\end{IEEEeqnarray*}
where $R_2<\sqrt{\frac{R}{2^s}}$; notice $\sum_{2i\geq n}{A_{2i-n}}<\infty$. Finally notice
\begin{IEEEeqnarray*}{rCl}
    \Bigl(\frac{2}{\delta}\Bigr)^m \sum_{k=0}^m \Bigl(\frac{2^s\delta}{2R}\Bigr)^k
    &\leq& C\frac{1}{R_0^m}
\end{IEEEeqnarray*}
if we set
\begin{IEEEeqnarray*}{rCl}
    R_0 &:=& \begin{cases}
    \frac{R}{2^s}& \text{if $\frac{2^s\delta}{2R}>1$},\\
    \frac{\delta}{4}& \text{if $\frac{2^s\delta}{2R}=1$},\\
    \frac{\delta}{2}& \text{if $\frac{2^s\delta}{2R}<1$}.
\end{cases}
\end{IEEEeqnarray*}

Collecting the three previous estimates yield
\begin{IEEEeqnarray*}{rCl}
    \sum_{2i\geq n}\sum_{j\geq0}E_{i,j} &\leq&
    C \frac{m!^s}{R_0^m}\,\frac{(2l)!^\frac{1}{2}}{R_1^{2l}}\,\frac{n!^\frac{s}{2}}{R_2^n}
    \sum_{j\geq0}e^{-\lambda_jt_0} \sum_{2i\geq n}{A_{2i-n}}.
\end{IEEEeqnarray*}
As the two series in the rhs converge the claim is proved.
\end{proof}

\section{Null controllability}\label{sec:controllability}
In this section we derive an explicit control steering the system from any initial state~$\theta_0\in L^2(\Omega)$ at time~$0$ to the final state~$0$ at time~$T>0$. Two ideas are involved: on the one hand thanks to the flatness property it is easy, by selecting a suitable trajectory for the flat output, to find a control achieving the steering to zero starting from a certain set of initial conditions; on the other hand thanks to the regularizing property of the heat equation this set is reached from any~$\theta_0\in L^2(\Omega)$ when applying first a zero control for some time (lemma~\ref{lem:zerocontrol}).
\begin{lem}\label{lem:zerocontrol}
Let $\theta_0\in L^2(\Omega)$ and $\tau>0$. Consider the final state $\theta_\tau(x_1,x_2):=\theta(\tau,x_1,x_2)$ reached when applying the control $u(t):=0$, $t\in[0,\tau]$, starting from the initial state~$\theta_0$.

Then $\theta_\tau$ can be expanded as
\begin{IEEEeqnarray*}{rCl}
\theta_\tau(x_1,x_2) &=& \sum_{j\ge0}e^{-\lambda_j\tau}e_j(x_1)\sum_{i\ge0}y_{j,i}\frac{x_2^{2i}}{(2i)!},
\end{IEEEeqnarray*}
with
\begin{IEEEeqnarray*}{rCl}
\abs{y_{j,i}} &\le& C\Bigl(1+\frac{1}{\sqrt\tau}\Bigr)\frac{i!}{\tau^i}
\end{IEEEeqnarray*}
where $C$ is some positive constant depending only on~$\theta_0$.
\end{lem}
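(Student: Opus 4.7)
The plan is to use separation of variables in the Neumann eigenbasis of $\Omega$, Taylor expand the resulting factor in $x_2$, and then estimate each coefficient by a Cauchy--Schwarz/saddle-point argument. Since $u\equiv 0$ on $[0,\tau]$, the boundary condition~\eqref{eq:bcu} reduces to the homogeneous Neumann condition $\theta_{x_2}(t,x_1,1)=0$, so $\theta$ solves the pure Neumann heat equation on~$\Omega$. I would introduce the 1-D Neumann eigenbasis $(f_k)_{k\ge 0}$ of $L^2(0,1)$ with $f_0\equiv 1$ and $f_k(x_2)=\sqrt{2}\cos(k\pi x_2)$, eigenvalues $\mu_k=(k\pi)^2$; the products $e_j(x_1)f_k(x_2)$ form an orthonormal eigenbasis of the Neumann Laplacian on~$\Omega$. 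Writing $\theta_0=\sum_{j,k}\alpha_{j,k}\,e_j(x_1)f_k(x_2)$ with $\sum_{j,k}\alpha_{j,k}^2=\norm{\theta_0}^2$ and applying the heat semigroup gives
\begin{IEEEeqnarray*}{rCl}
    \theta_\tau(x_1,x_2)&=&\sum_{j\ge 0}e^{-\lambda_j\tau}e_j(x_1)G_j(x_2),\\
    G_j(x_2)&:=&\alpha_{j,0}+\sqrt{2}\sum_{k\ge 1}\alpha_{j,k}\,e^{-(k\pi)^2\tau}\cos(k\pi x_2).
\end{IEEEeqnarray*}

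Next I would Taylor expand each $\cos(k\pi x_2)$ around~$x_2=0$; the Gaussian factor $e^{-(k\pi)^2\tau}$ makes the resulting double series absolutely convergent, so Fubini lets me exchange the sums in~$i$ and~$k$ and read off $y_{j,0}=\alpha_{j,0}+\sqrt{2}\sum_{k\ge 1}\alpha_{j,k}e^{-(k\pi)^2\tau}$ together with $y_{j,i}=\sqrt{2}(-1)^i\sum_{k\ge 1}\alpha_{j,k}e^{-(k\pi)^2\tau}(k\pi)^{2i}$ for $i\ge 1$. By Cauchy--Schwarz in~$k$ and $\sum_k\alpha_{j,k}^2\le\norm{\theta_0}^2$,
\begin{IEEEeqnarray*}{rCl}
    \abs{y_{j,i}}^2&\le&2\norm{\theta_0}^2\,\Sigma_i,\qquad \Sigma_i:=\sum_{k\ge 1}(k\pi)^{4i}e^{-2(k\pi)^2\tau},
\end{IEEEeqnarray*}
so the problem reduces to bounding $\Sigma_i$.

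I would control $\Sigma_i$ by integral comparison: the summand is unimodal in $k$ with peak value $(i/(e\tau))^{2i}$ at $k^\ast=\sqrt{i/\tau}/\pi$, so $\Sigma_i\le C\bigl[(i/(e\tau))^{2i}+\int_0^\infty(\pi x)^{4i}e^{-2(\pi x)^2\tau}dx\bigr]$, and the change of variable $\xi=\pi x\sqrt{2\tau}$ evaluates the integral to $C\,\Gamma(2i+\tfrac{1}{2})/[(2\tau)^{2i}\sqrt{\tau}]$. Stirling's formula turns both $(i/(e\tau))^{2i}$ and $\Gamma(2i+\tfrac{1}{2})/(2\tau)^{2i}$ into multiples of $(i!)^2/\tau^{2i}$ (up to algebraic $1/i$ prefactors), yielding $\Sigma_i\le C(i!)^2/\tau^{2i}\cdot(1+1/\sqrt{\tau})$; taking square roots gives the claimed bound for $i\ge 1$, and the $i=0$ case follows from a direct Cauchy--Schwarz estimate using $\sum_{k\ge 1}e^{-2(k\pi)^2\tau}\le C(1+1/\sqrt{\tau})$.

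The main obstacle is keeping the factorial growth in~$i$ sharp: a naive pointwise bound $(k\pi)^{2i}e^{-(k\pi)^2\tau}\le(i/(e\tau))^i$ applied term by term leaks a parasitic $2^i$ factor when converted to $i!/\tau^i$ via Stirling, so routing through Cauchy--Schwarz and the $\Gamma$-function is essential to recover the exact $i!/\tau^i$ growth together with only the algebraic prefactor $1+1/\sqrt{\tau}$.
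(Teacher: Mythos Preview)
Your argument is correct and follows the same route as the paper: Neumann eigenbasis expansion, Taylor expansion of $\cos(k\pi x_2)$, Fubini to read off the $y_{j,i}$, and then an integral-comparison plus Stirling estimate on the resulting moment sum. The only cosmetic difference is that the paper extracts the Fourier coefficients via the crude bound $|c_{j,n}|\le\sup_{j,n}|c_{j,n}|$ and estimates $\sum_{n}(n\pi)^{2i}e^{-(n\pi)^2\tau}$ directly (evaluating the comparison integral by repeated integration by parts), rather than passing through Cauchy--Schwarz; in particular your worry about a parasitic $2^i$ factor is unfounded, since the paper's sup-norm route also recovers the sharp $i!/\tau^i$ growth without any detour.
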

\begin{proof}
We can decompose $\theta _0$ as the Fourier series
\bes
\theta_0(x_1,x_2)=\sum_{j,n\ge0}c_{j,n}e_j(x_1)\sqrt{2}\cos(n\pi x_2)
\ees
where the convergence holds in $L^2(0,1)$ and
\bes
\sum_{j,n\ge0}\abs{c_{j,n}}^2 <\infty.
\ees
The solution of~\eqref{eq:heat}-\eqref{eq:bcu} starting from~$\theta_0$ then reads
\bes
\theta(t,x_1,x_2)=\sum_{j,n\ge0}c_{j,n}e^{-(\lambda_j+n^2\pi^2)t}e_j(x_1)\sqrt{2}\cos(n\pi x_2)
\ees
and in particular
\bes
\theta_\tau(x_1,x_2)=\sum_{j,n\ge0}c_{j,n}e^{-(\lambda_j+n^2\pi^2)\tau}e_j(x_1)\cos(n\pi x_2).
\ees
This series obviously uniformly converges since
\bes
\abs{c_{j,n}e^{-(\lambda_j+n^2\pi^2)\tau}e_j(x_1)\sqrt{2}\cos(n\pi x_2)}
\leq C_1e^{-n^2\pi^2\tau}. 
\ees
Moreover
\begin{IEEEeqnarray*}{rCl}
\theta_\tau(x_1,x_2) &=& \sqrt{2}\sum_{j,n\ge0}c_{j,n}e^{-(\lambda_j+n^2\pi^2)\tau}e_j(x_1)\\
&&\quad\times\sum_{i\ge0}(-1)^i\frac{(n\pi x_2)^{2i}}{(2i)!}\\
&=& \sum_{j\ge0}e^{-\lambda_j\tau}e_j(x_1)\sum_{i\ge0}\frac{x_2^{2i}}{(2i)!}\\
&&\quad\times\underbrace{\sqrt{2}(-1)^i\sum_{n\ge0}c_{j,n}e^{-n^2\pi^2\tau}\frac{(n\pi)^{2i}}{(2i)!}}_{=:y_{j,i}}.
\end{IEEEeqnarray*}

\begin{IEEEeqnarray*}{rCl}
\theta_\tau(x) &=& \sqrt{2}\sum_{n\ge 0}c_ne^{-n^2\pi^2\tau}\sum_{i\ge0}(-1)^i\frac{(n\pi x)^{2i}}{(2i)!}\\
%
%
&=& \sum_{i\ge0} \frac{x^{2i}}{(2i)!} \underbrace{\left(\sqrt{2}(-1)^i\sum_{n\ge0}c_n e^{-n^2\pi^2\tau}(n\pi)^{2i}\right)}_{=:y_i}
\end{IEEEeqnarray*}
The change in the order of summation will be justified once we have proved that $y_{j,i}$, $i\geq0$, is absolutely convergent and
\bes
\sum_{i\ge0}\abs{y_{j,i}}\frac{x_2^{2i}}{(2i)!} <\infty,  \quad\forall x_2\geq0.
\ees
For $i\ge 0$ let $h_i(x):=e^{-\tau\pi^2x^2}(\pi x)^{2i}$ and $N_i:=\left[\bigl(\frac{i}{\pi ^2\tau}\bigr)^{\frac{1}{2}}\right]$. The map $h_i$ is increasing on $\bigl[0,\left(\frac{i}{\pi ^2\tau}\right)^{\frac{1}{2}}\bigr]$ and decreasing on $\bigl[\left(\frac{i}{\pi ^2\tau}\right)^{\frac{1}{2}},+\infty\bigr)$ hence
\begin{IEEEeqnarray*}{rCl}
\sum_{n\ge0}h_i(n) &\le& \int_0^{N_i}h_i(x)dx + h_i(N_i)\\
&& +\: h_i (N_i + 1) + \int_{N_{i+1} }^\infty h_i(x)dx\\
&\le& 2h_i\left(\Bigl(\frac{i}{\pi^2\tau}\Bigr)^{\frac{1}{2}}\right) + \int_0^\infty h_i(x)dx\\
&\le& C_2\frac{i!}{\tau^i\sqrt{i}} + \int_0^\infty h_i(x)dx;
\end{IEEEeqnarray*}
$C_2$ is some positive constant and we have used Stirling's formula. On the other hand integrating by parts yields
\begin{IEEEeqnarray*}{rCl}
\int_0^\infty h_i(x)dx &=& \frac{2i-1}{2\tau}\int_0^\infty h_{i-1}(x)dx\\
&=& \frac{(2i-1)\cdots 3\cdot1}{(2\tau)^i}\int_0^\infty e^{-\tau\pi^2x^2}dx\\
&=& \frac{(2i)!}{2^ii!(2\tau)^i}\cdot\frac{1}{\pi\sqrt\tau} \int_0^\infty e^{-x^2}dx\\
&\leq& C_3\frac{i!}{\tau^i\sqrt{i\tau}},
\end{IEEEeqnarray*}
where $C_3$ is some positive constant and we have again used Stirling's formula. As a consequence
\bes
\abs{y_{j,i}}\le\sqrt{2}\sup_{j,n\ge 0}\abs{c_{j,n}}\sum_{n\ge0}h_i(n) \le C\Bigl(1+\frac{1}{\sqrt\tau}\Bigr)\frac{i!}{\tau^i}
\ees
where $C$ is some positive constant. Finally
\bes
\sum_{i\ge0}\abs{y_{j,i}}\frac{x_2^{2i}}{(2i)!}
\leq C\Bigl(1+\frac{1}{\sqrt\tau}\Bigr)\sum_{i\ge0}\frac{i!}{(2i)!}\Bigl(\frac{x_2^2}{\tau}\Bigr)^i<\infty,
\ees
where the series in the rhs is obviously convergent.
\end{proof}
\begin{thm}\label{prop:nullcont}
Consider any $\theta _0\in L^2(\Omega )$, $T>0$, $\tau\in(0,T)$ and $s\in(1,2)$. Then there exists a sequence $(y_j)_{j\ge0}$ of Gevrey functions of order $s$ on~$[\tau,T]$ such that the control
\begin{IEEEeqnarray*}{rCl}
    u(t,x_1) &:=& \begin{cases}
    0& \text{if $0\le t\le\tau$},\\
    \displaystyle\sum_{i,j\ge 0}e^{-\lambda_jt}e_j(x_1)\frac{y_j^{(i)}(t)}{(2i-1)!} &\text{if $\tau<t\le T$}
\end{cases}
\end{IEEEeqnarray*}
steers the system from the initial state $\theta_0$ at time~$0$ to the final state~$0$ at time~$T$.

Moreover $u$ is Gevrey of order~$s$ in~$t$ and $1/2$ in~$x_1$ on $[0,T]\times[0,L]$; the solution $\theta$ of~\eqref{eq:heat}-\eqref{eq:bcu} belongs to $C\bigl([0,T],L^2(\Omega)\bigr)$ and is Gevrey of order $s$ in~$t$, $1/2$ in~$x_1$ and $s/2$ in~$x_2$  on~$[\ep,T]\times\overline{\Omega}$ for all $\ep\in(0,T)$.
\end{thm}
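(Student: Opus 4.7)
The plan is to combine Lemma~\ref{lem:zerocontrol} with the flatness parameterization in the two-phase strategy outlined just before the statement. First I would apply $u\equiv 0$ on $[0,\tau]$: by Lemma~\ref{lem:zerocontrol}, the state at time $\tau$ then has the explicit expansion $\theta_\tau=\sum_j e^{-\lambda_j\tau}e_j(x_1)\sum_i y_{j,i}x_2^{2i}/(2i)!$ with $|y_{j,i}|\leq C_0\,i!/\tau^i$ uniformly in $j$. Next, on $[\tau,T]$, I would use the flatness formulas \eqref{eq:theta}--\eqref{eq:u} with a flat output $y=(y_j)_{j\geq 0}$ satisfying (i) $y_j^{(i)}(\tau)=y_{j,i}$ for all $i,j$ (so that the state at $\tau^+$ matches $\theta_\tau$), (ii) $y_j^{(i)}(T)=0$ for all $i,j$ (so that $\theta(T,\cdot,\cdot)=0$), and (iii) the uniform Gevrey-$s$ bound required by Theorem~\ref{prop:regularity}.

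For the construction I would pick $\delta\in\bigl(0,\min(\tau,T-\tau)\bigr)$ and set
\begin{IEEEeqnarray*}{rCl}
y_j(t) &:=& \phi_s\!\left(\tfrac{t-\tau}{\delta}\right)g_j(t),\qquad g_j(t):=\sum_{i\geq 0}y_{j,i}\frac{(t-\tau)^i}{i!},
\end{IEEEeqnarray*}
for $t\in[\tau,\tau+\delta]$, extended by $0$ on $[\tau+\delta,T]$ (consistent with $\phi_s\equiv 0$ on $[1,\infty)$). The bound on $|y_{j,i}|$ makes $g_j$ analytic on $|t-\tau|<\tau$, and a direct Cauchy-type computation yields $|g_j^{(k)}(t)|\leq C\,k!/(\tau-\delta)^k$ uniformly in $j$ and $t\in[\tau,\tau+\delta]$. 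Since $\phi_s$ is Gevrey of order $s$, Leibniz's rule then produces a bound $|y_j^{(i)}(t)|\leq M\,i!^s/R^i$ valid on $[\tau,T]$, uniformly in $j$. The properties $\phi_s(0)=1$, $\phi_s^{(i)}(0)=0$ for $i\geq 1$, and $\phi_s^{(i)}(1)=0$ for all $i\geq 0$ immediately give (i) and (ii). Applying Theorem~\ref{prop:regularity} on $[t_1,t_2]=[\tau,T]$ then supplies the claimed Gevrey regularity of $u$ and $\theta$ there, with state-matching at $\tau^+$ and steering-to-zero at $T$ obtained by evaluating the series.

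The main obstacle is to glue the two phases into a control that is Gevrey of order $s$ on the entire interval $[0,T]$: since $u\equiv 0$ on $[0,\tau]$, I must show $\partial_t^k u(\tau^+,x_1)=0$ for every $k\geq 0$. Here I would exploit the fact that $\theta$ satisfies the heat equation on $[\tau,T]$, so $\partial_t^k u(t,x_1)=\partial_{x_2}\Delta^k\theta(t,x_1,1)$; evaluating at $t=\tau^+$ using the Fourier expansion of $\theta_\tau$ from the proof of Lemma~\ref{lem:zerocontrol} and invoking $\sin(n\pi)=0$ for $n\in\N$ makes each such derivative vanish at $x_2=1$, matching the trivial left-hand value and producing a Gevrey-$s$ $u$ on all of $[0,T]$. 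Finally, continuity $\theta\in C([0,T],L^2(\Omega))$ follows from the $L^2$ heat semigroup on $[0,\tau]$ together with the (continuous) Gevrey regularity on $[\tau,T]$ matched at $\tau$; and on $[\ep,\tau]$, Gevrey regularity of $\theta$ in all three variables is classical from the analyticity of the heat semigroup for $t>0$.
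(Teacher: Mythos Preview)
Your proposal is correct and follows essentially the same route as the paper: zero control on $[0,\tau]$, then on $[\tau,T]$ take each $y_j$ to be a Gevrey-$s$ cutoff times an analytic function having Taylor coefficients $y_{j,i}$ at~$\tau$, invoke Theorem~\ref{prop:regularity}, and verify the gluing at $t=\tau$. The only minor differences are that the paper writes the analytic factor explicitly as the Dirichlet series $\bar y_j(t)=\sqrt2\sum_{n\ge0} c_{j,n}e^{-n^2\pi^2 t}$ (manifestly analytic for all $t>0$, so no auxiliary $\delta$ is needed and the cutoff $\phi_s$ is stretched over all of $[\tau,T]$), and it carries out the compatibility check on $\partial_t^k\theta(\tau^\pm,\cdot,\cdot)$ directly rather than on $\partial_t^k u(\tau^+,\cdot)$.
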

\begin{proof}
By lemma~\ref{lem:zerocontrol} the state reached at time~$\tau$ is
\begin{IEEEeqnarray*}{rCl}
    \theta_\tau(x_1,x_2) &=&
    \sum_{j\ge0}e^{-\lambda_j\tau}e_j(x_1)\sum_{i\ge0}y_{j,i}\frac{x_2^{2i}}{(2i)!},
\end{IEEEeqnarray*}
with
\begin{IEEEeqnarray*}{rCl}
    y_{j,i} &=& \sqrt{2}\sum_{n\ge0}c_{j,n}e^{-n^2\pi^2\tau}(-n^2\pi^2)^i.
\end{IEEEeqnarray*}
Clearly the functions
\begin{IEEEeqnarray*}{rCl}
    \bar y_j(t) &:=& \sqrt{2}\sum_{n\ge0}c_{j,n}e^{-n^2\pi^2t}
\end{IEEEeqnarray*}
satisfy $\overline y_j^{(i)}(\tau)=y_{j,i}$ for all $i,j\geq0$. Hence the desired functions $y_j$ are given by
\begin{IEEEeqnarray*}{rCl}
    y_j(t) &:=& \phi_s\Bigl(\frac{t-\tau}{T-\tau}\Bigr)\,\overline y_j(t)
\end{IEEEeqnarray*}
($\phi_s$ is defined at the end of the introduction). Indeed by lemma~\ref{lem:zerocontrol} the growth property of the~$y_{j,i}$'s implies the $\overline{y}_j$'s are analytic on~$[\tau,T]$ hence Gevrey of order~$s>1$; since $\phi(t):=\phi_s\Bigl(\frac{t-\tau}{T-\tau}\Bigr)$ is also Gevrey of order~$s$ on~$[\tau,T]$, so is the product~$y_j$ of $\overline y_j$ by~$\phi$.

On the other hand the definition of~$\phi_s$ implies the $y_j$'s satisfy for all $i,j\geq0$
\begin{IEEEeqnarray*}{rCl}
y_j^{(i)}(\tau)  &=& y_{j,i}\\
y_j^{(i)}(T)  &=& 0,
\end{IEEEeqnarray*}
so that $\theta(\tau^+,x_1,x_2)=\theta_\tau(x_1,x_2)$.

The proposed control~$u$ achieves the steering to zero: the solution $\theta$ of~\eqref{eq:heat}--\eqref{eq:bcu} is then given by~\eqref{eq:theta} and obviously satisfies~$\theta(T,x_1,x_2)=0$; by proposition~\ref{prop:regularity} it is Gevrey of order~$s$ in~$t$, $1/2$ in~$x_1$, and~$s/2$ in~$x_2$ on~$[\tau,T]\times\overline{\Omega}$. Moreover $u$ is also Gevrey of order~$s$ in~$t$ and $1/2$ in~$x_1$ on~$[0,T]\times[0,L]$.

It can be shown that for $\ep\leq t\leq\tau$ the solution $\theta$ of~\eqref{eq:heat}--\eqref{eq:bcu} is Gevrey of order~$1$ in~$t$, $1/2$ in~$x_1$, and~$s/2$ in~$x_2$ on~$[\tau,T]\times\overline{\Omega}$. The proof is omitted since this fact is not used to design the control. To complete the proof we then only need to check the compatibility at~$t=\tau$, ie
\begin{IEEEeqnarray*}{rCl}
\partial_t^k\theta(\tau^+,x_1,x_2) &=& \partial_t^k\theta(\tau,x_1,x_2)
\end{IEEEeqnarray*}
for all $k\ge0$ and $x\in\overline{\Omega}$. Indeed using the notations in the proof of lemma~\ref{lem:zerocontrol}
\begin{IEEEeqnarray*}{rCl}
\IEEEeqnarraymulticol{3}{l}{
\partial_t^k\theta(\tau^+,x_1,x_2)
}\\ ~~
&=& \sum_{i,j\ge0}e_j(x_1)\frac{x_2^{2i}}{(2i)!}\,
\partial_t^k\bigl[e^{-\lambda_jt}y_j^{(i)}(t)\bigr]\Bigl\vert_{t=\tau}\\
&=& \sum_{i,j\ge0}e_j(x_1)\frac{x_2^{2i}}{(2i)!}\,  \sum_{m=0}^k  \binom{k}{m} (-\lambda_j)^m e^{-\lambda_j\tau}  y_{j,i+k-m}  \\
&=& \sum_{i,j\ge0}e_j(x_1)\frac{x_2^{2i}}{(2i)!}\,  \sum_{m=0}^k  \binom{k}{m} (-\lambda_j)^m e^{-\lambda_j\tau}\\
&& \quad\times\sqrt2\sum_{n\ge0}c_{j,n}e^{-n^2\pi^2\tau}(-n^2\pi^2)^{i+k-m}\\
&=& \sqrt2\sum_{j,n\ge0}e_j(x_1)c_{j,n}e^{-(\lambda_j+n^2\pi^2)\tau}\,
\sum_{i\ge0}(-1)^i\frac{(n\pi x_2)^{2i}}{(2i)!}\\
&& \quad\times \sum_{m=0}^k  \binom{k}{m} (-\lambda_j)^m(-n^2\pi^2)^{k-m}\\
&=& \sum_{j,n\ge0}(-\lambda_j-n^2\pi^2)^k c_{j,n}e^{-(\lambda_j+n^2\pi^2)\tau}\\
&& \quad\times e_j(x_1)
\sqrt{2}\cos(n\pi x_2)\\
&=& \partial_t^k\theta(\tau,x_1,x_2).
\end{IEEEeqnarray*}

Hence the solution $\theta$ of~\eqref{eq:heat}-\eqref{eq:bcu} belongs to $C\bigl([0,T],L^2(\Omega)\bigr)$ and is Gevrey of order $s$ in~$t$, $1/2$ in~$x_1$ and $s/2$ in~$x_2$  on~$[\ep,T]\times\overline{\Omega}$ for all $\ep\in(0,T)$. As a consequence $u$ is Gevrey of order~$s$ in~$t$ and $1/2$ in~$x_1$ on $[0,T]\times[0,L]$.
\end{proof}

\section{Numerical experiments}\label{sec:numerics}
We illustrate the approach on a numerical example. The parameters are
$L=1$, $T=0.3$, $\tau=0.05$, $s=1.65$. The initial condition is the ``double step''
\begin{IEEEeqnarray*}{rCl}
    \theta_0(x_1,x_2) &:=& \begin{cases}
    -1& \text{if $(x_1,x_2)\in(0,\frac{1}{2})\times(0,\frac{1}{2})$},\\
    1& \text{if $(x_1,x_2)\in(0,\frac{1}{2})\times(\frac{1}{2},1)$},\\
    1& \text{if $(x_1,x_2)\in(\frac{1}{2},1)\times(0,\frac{1}{2})$},\\
    -1& \text{if $(x_1,x_2)\in(\frac{1}{2},1)\times\times(\frac{1}{2},1)$}.
\end{cases}
\end{IEEEeqnarray*}
Its nonzero Fourier coefficients are
\begin{IEEEeqnarray*}{rCl'l}
    c_{2l+1,2p+1} &=& -2\frac{(-1)^{l+p}}{\pi^2(2l+1)(2p+1)}, & l,p\geq0.
\end{IEEEeqnarray*}
Notice $\theta_0$ is not continuous and that its Fourier coefficients decay fairly slowly. The series~\eqref{eq:theta}-\eqref{eq:u} and Fourier expansions have been truncated at a ``large enough'' order for a good accuracy.

The following figures are snapshots of the time evolution of the temperature from $t=0$ to $t=T-0.025$ by increments of~$0.025$  (the snapshot at $t=T$, being exactly~$0$, is omitted), followed by the complete time evolution of the control effort.
An interesting question to study in the future is the tradeoff between $\tau$ and~$T-\tau$ to reach the zero state with the smallest control effort: longer regularization~$\tau$ or longer duration~$T-\tau$ of the active control?

\centering
\includegraphics[width=0.61\columnwidth]{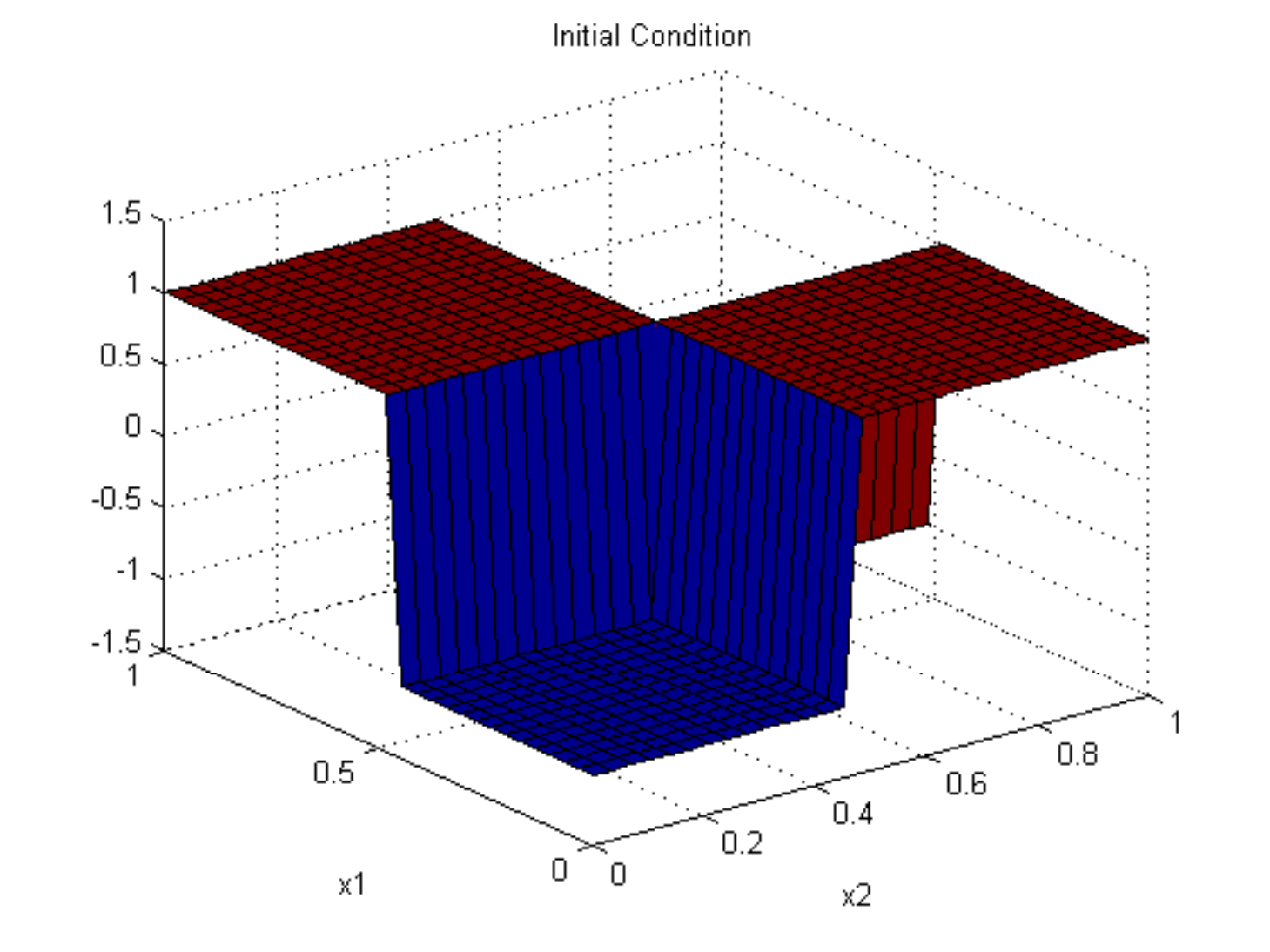}
\includegraphics[width=0.61\columnwidth]{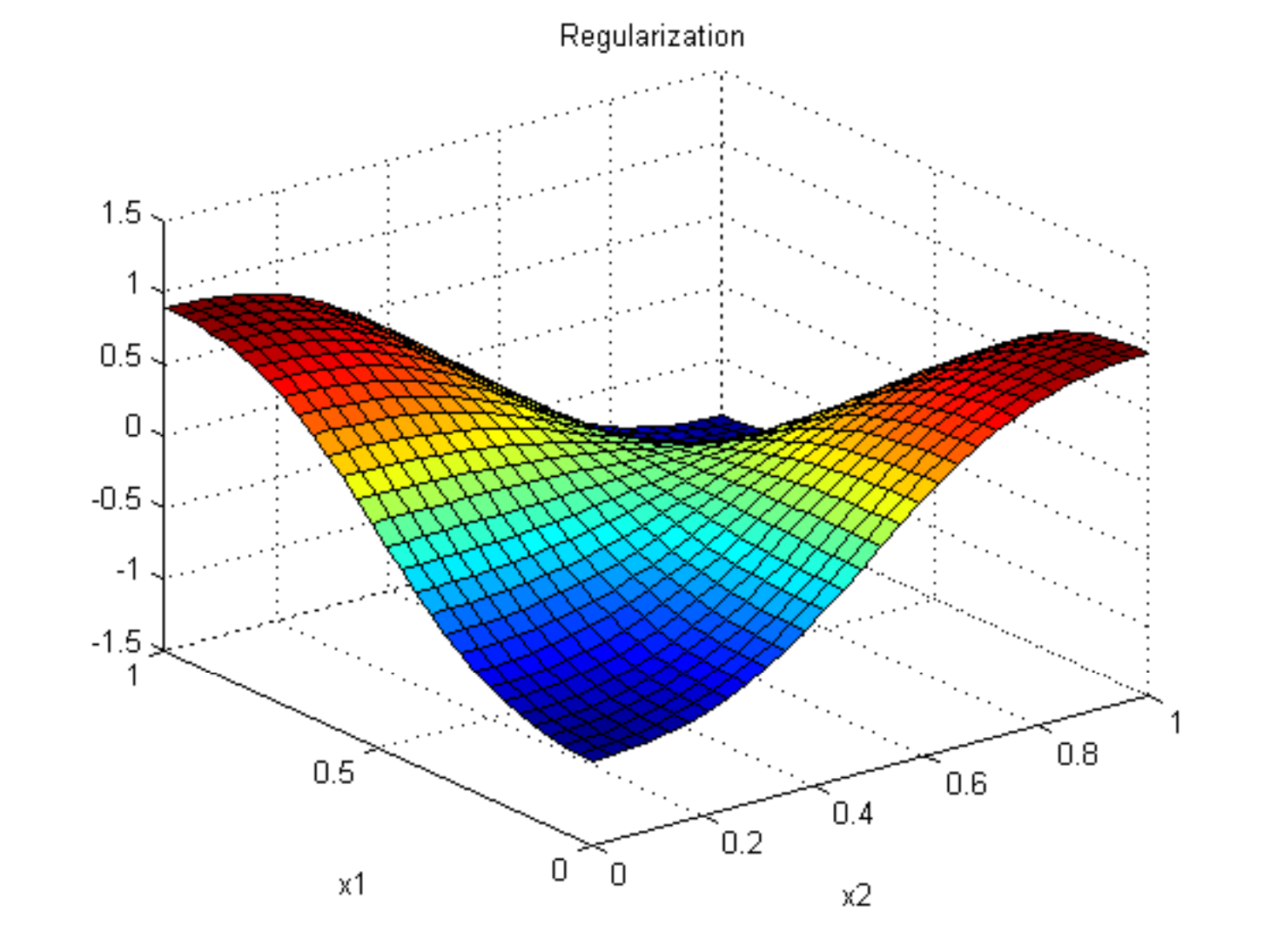}
\includegraphics[width=0.61\columnwidth]{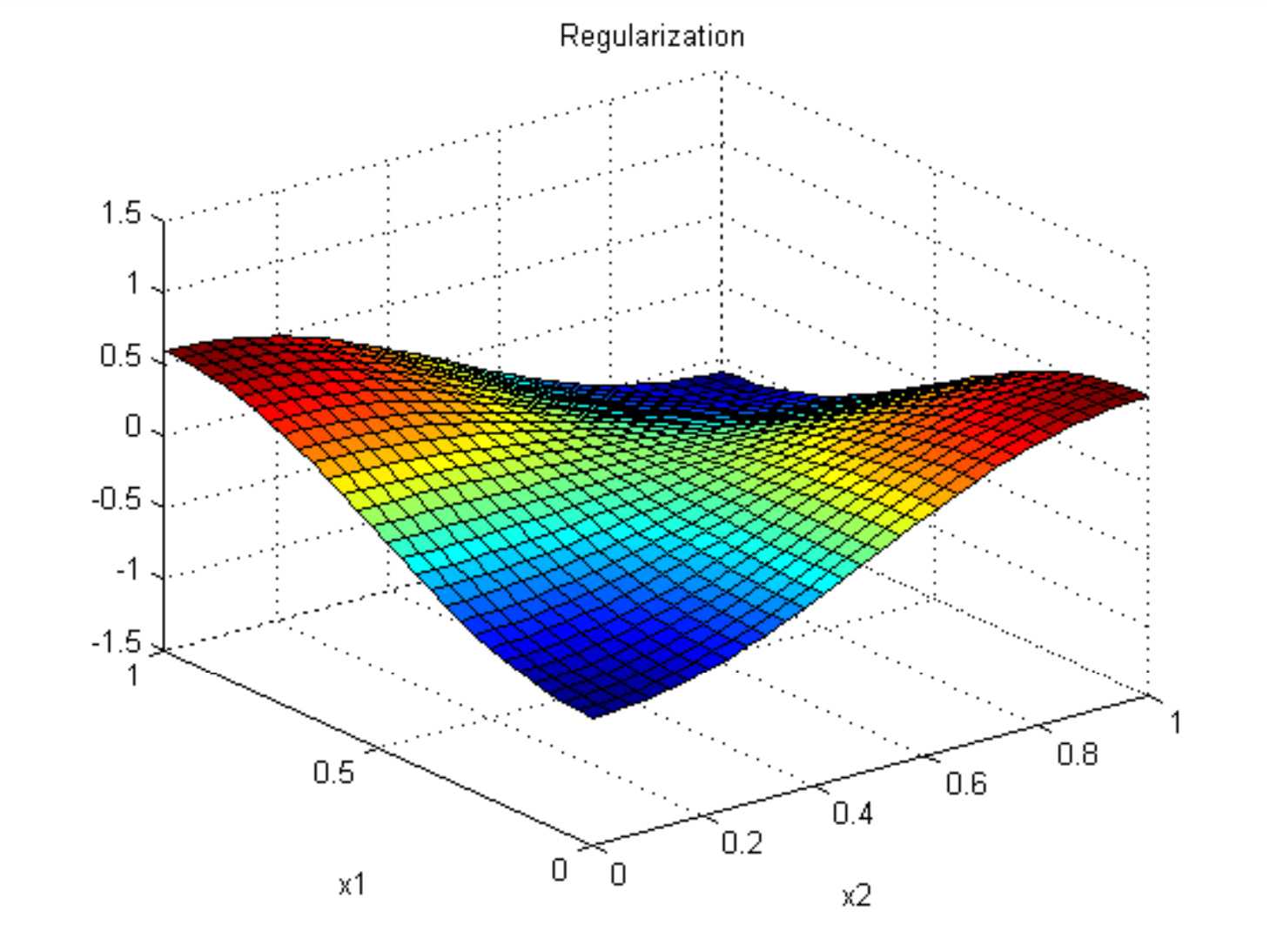}
\includegraphics[width=0.61\columnwidth]{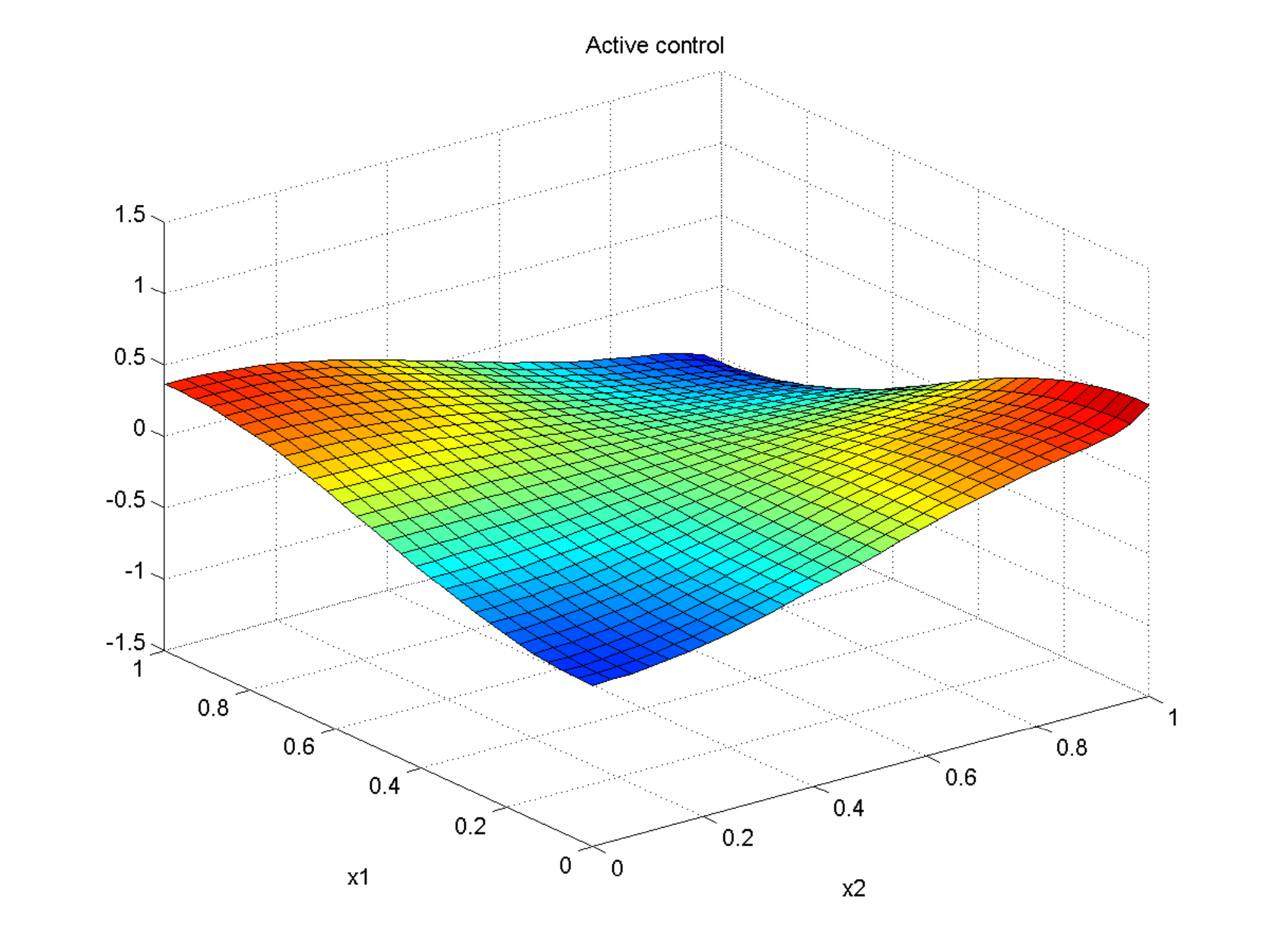}
\includegraphics[width=0.61\columnwidth]{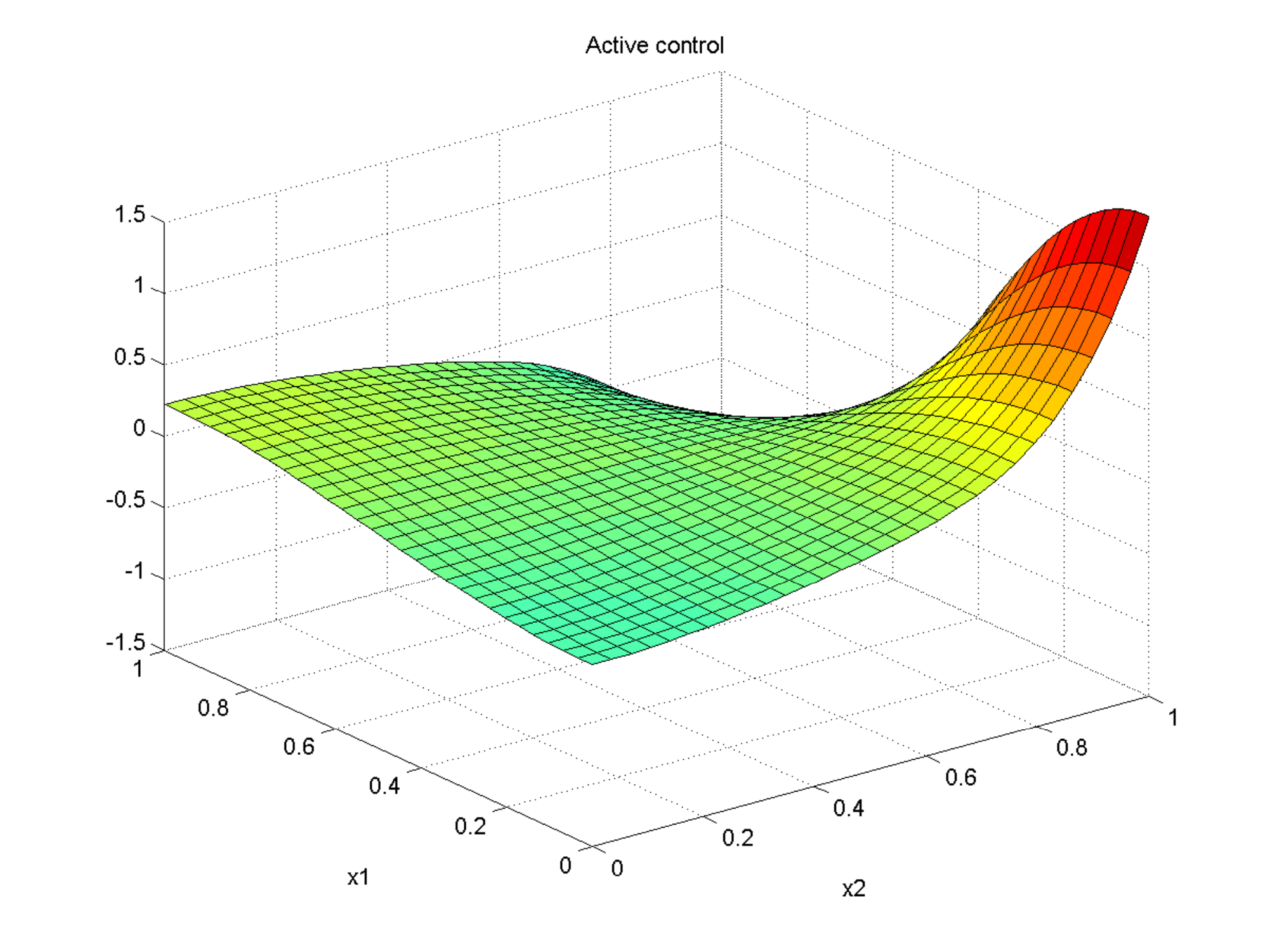}
\includegraphics[width=0.61\columnwidth]{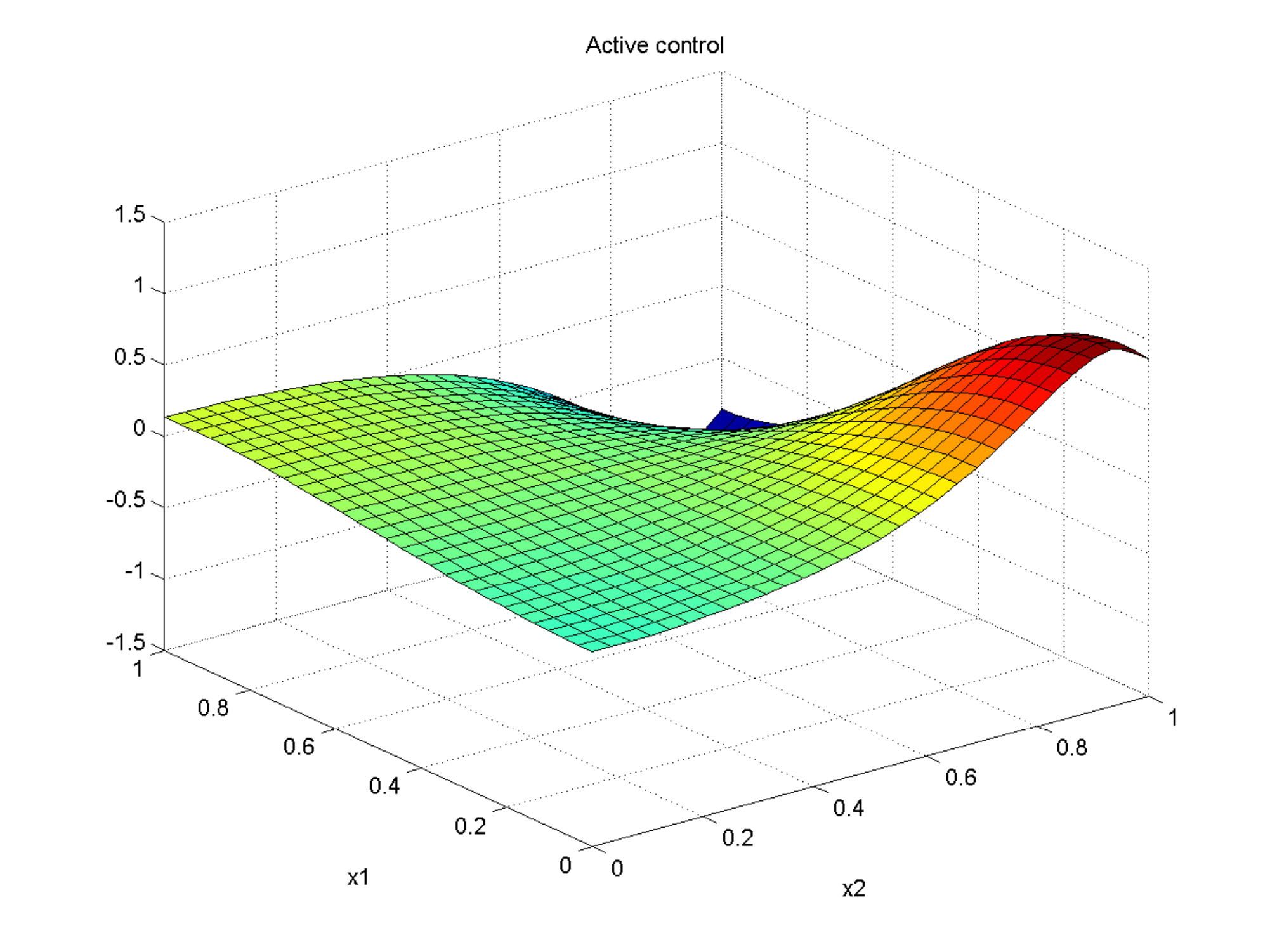}
\includegraphics[width=0.61\columnwidth]{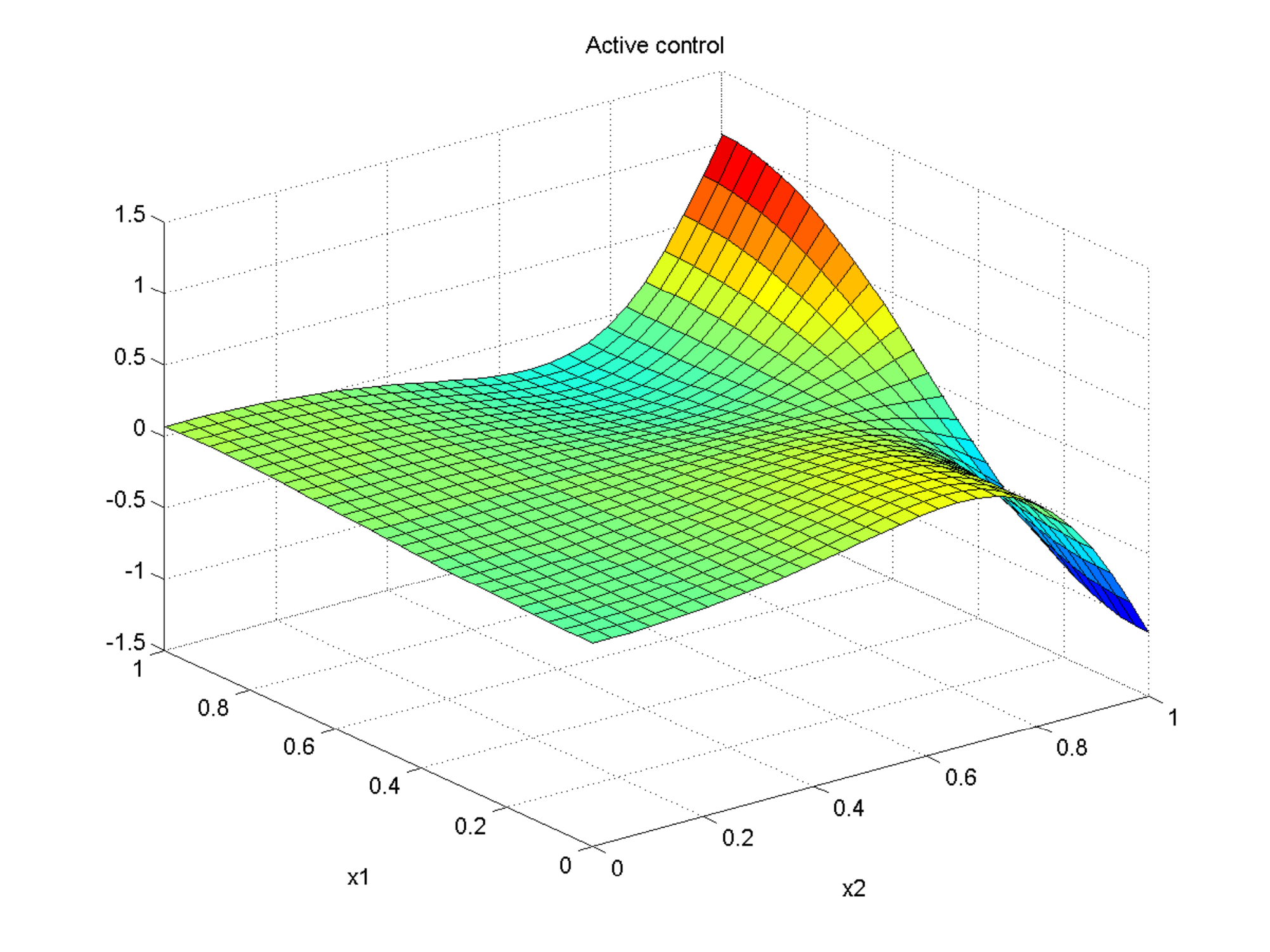}
\includegraphics[width=0.61\columnwidth]{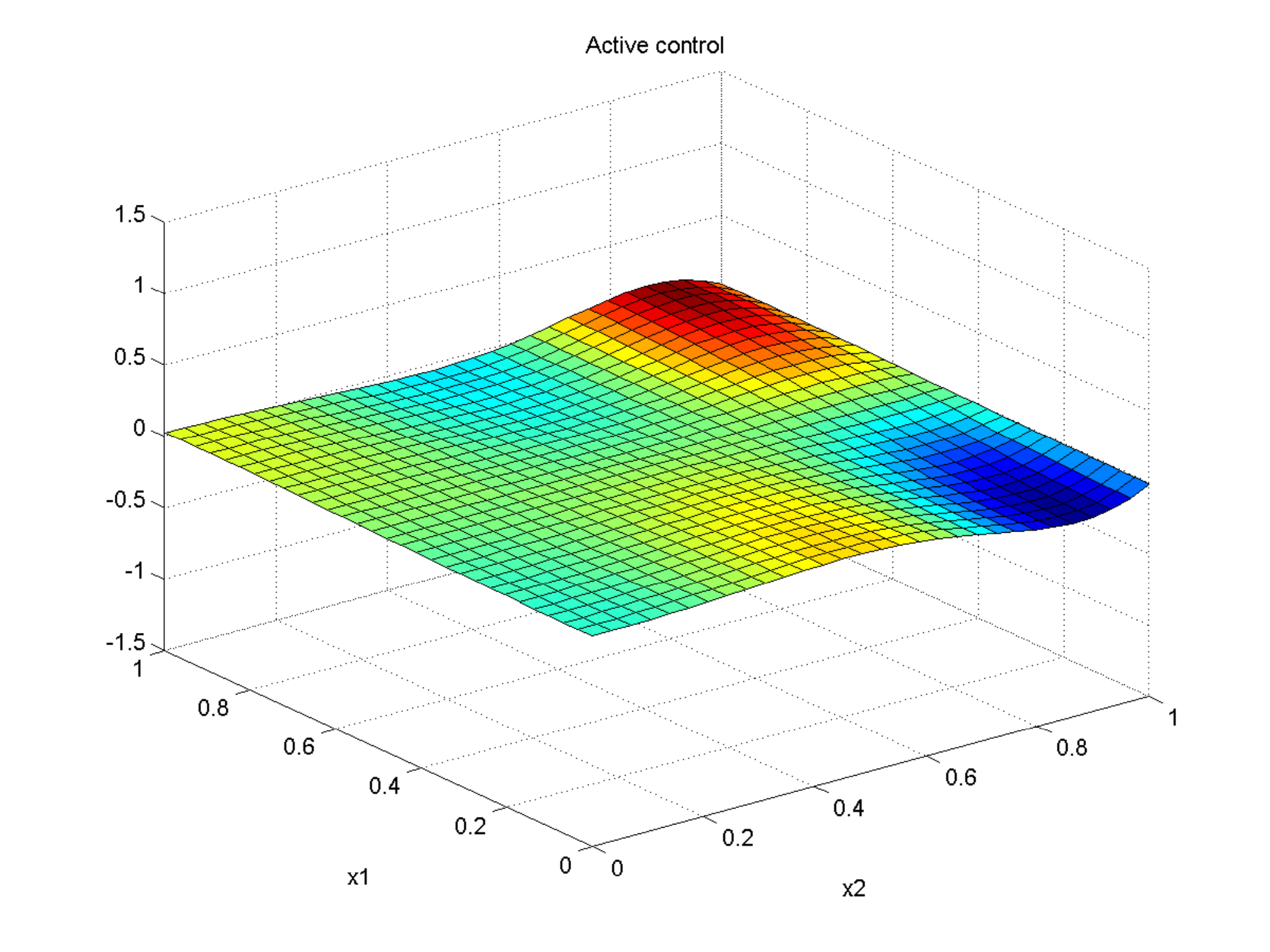}
\includegraphics[width=0.61\columnwidth]{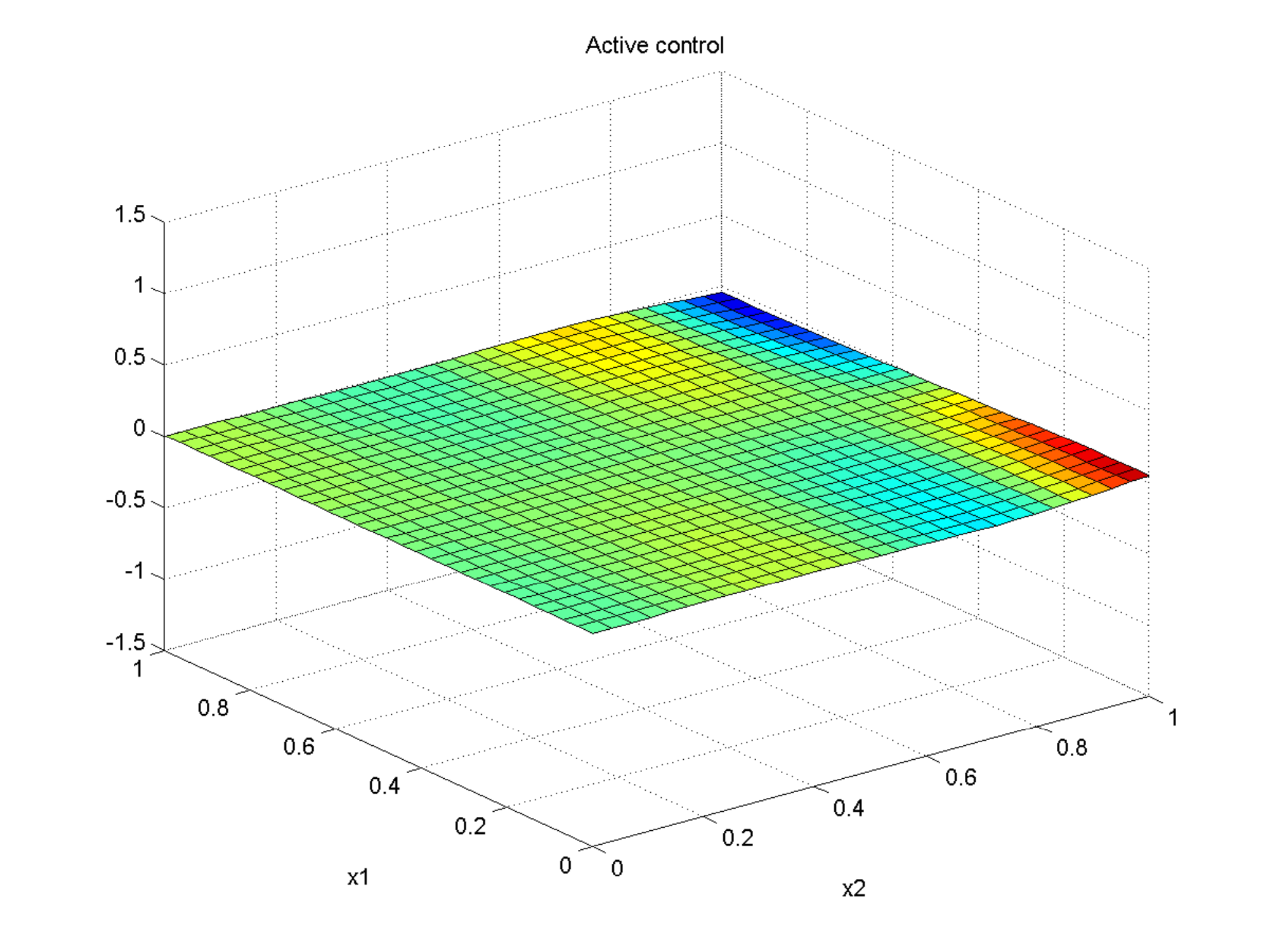}
\includegraphics[width=0.61\columnwidth]{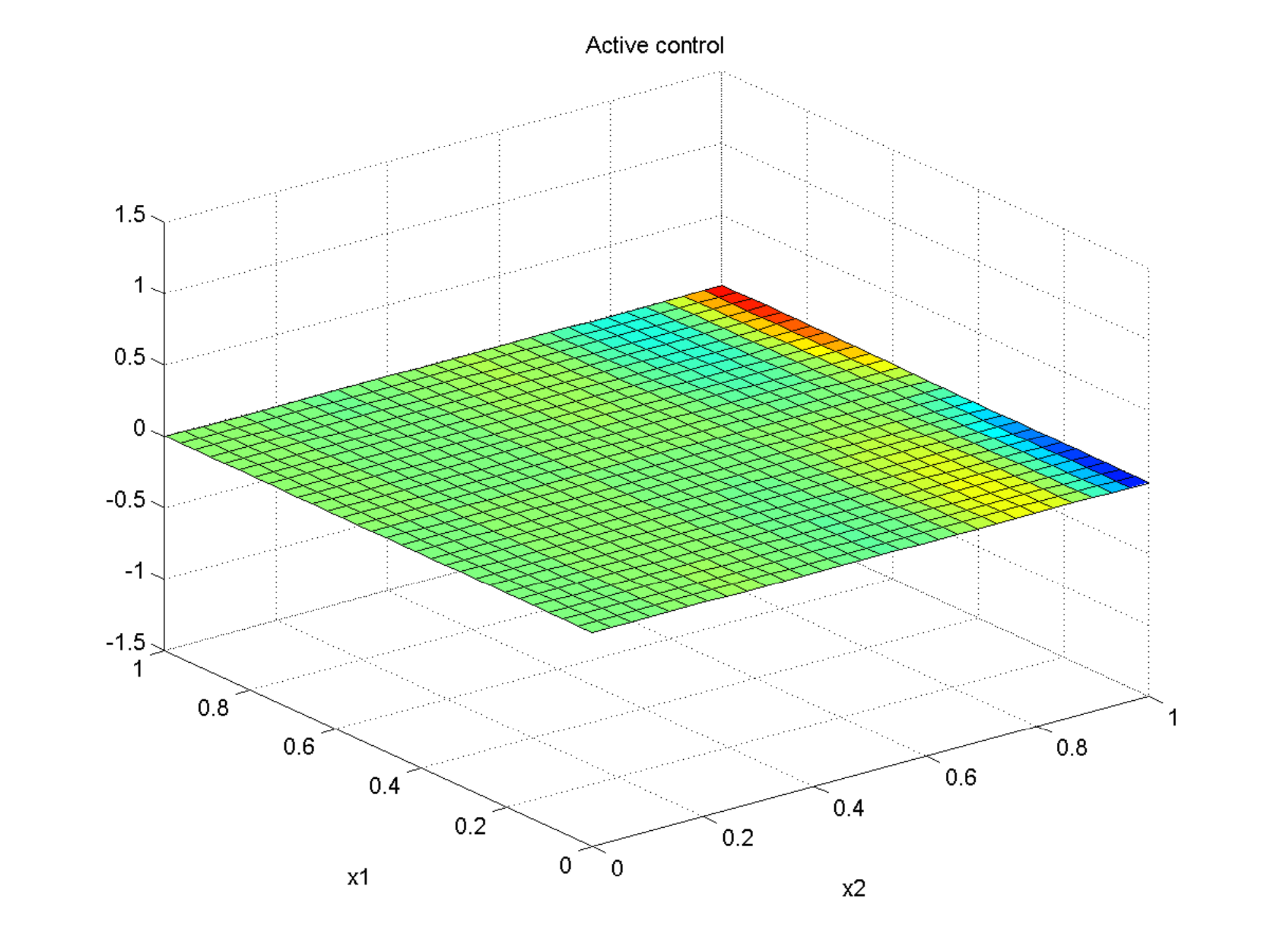}
\includegraphics[width=0.61\columnwidth]{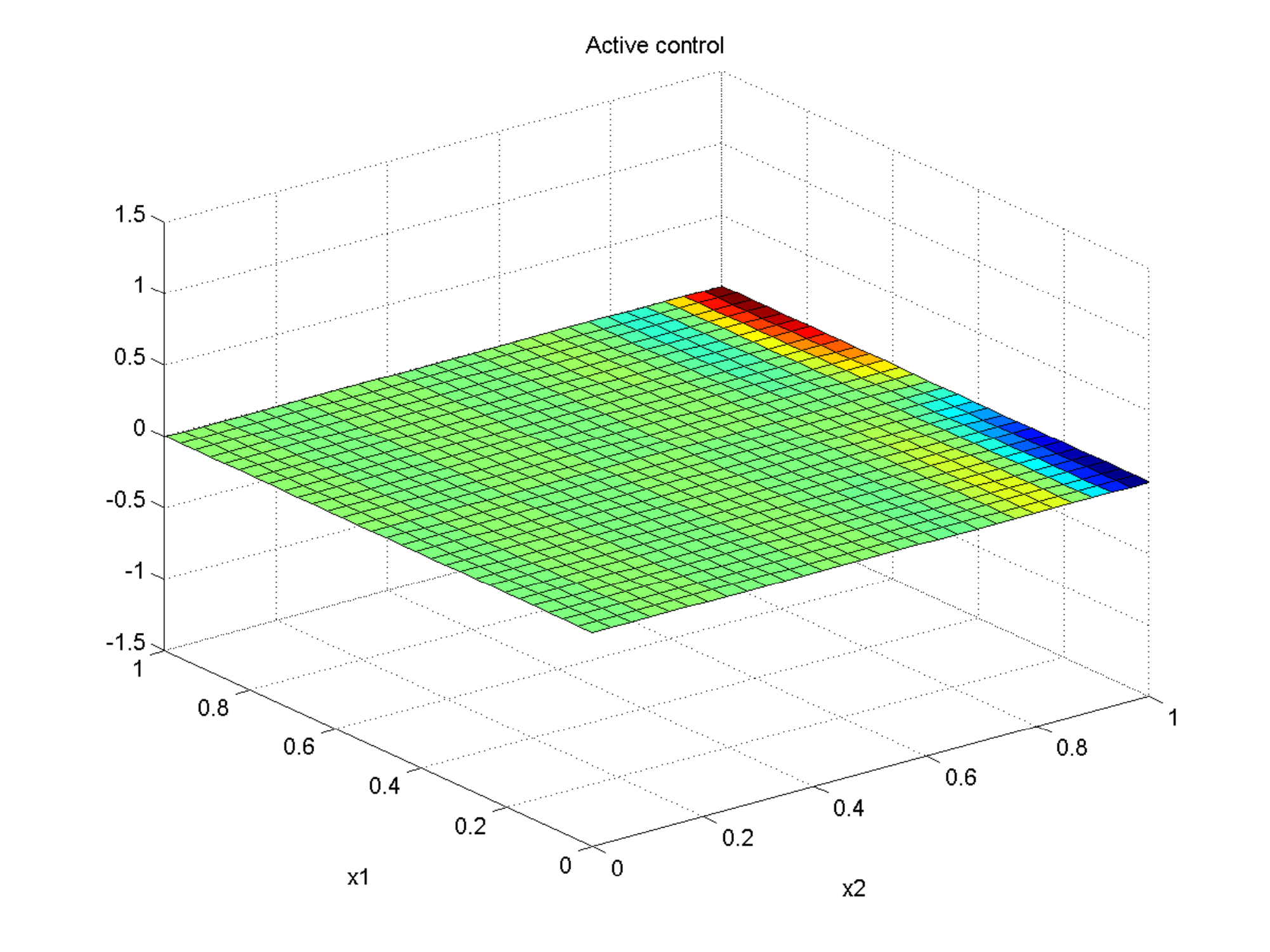}
\includegraphics[width=0.61\columnwidth]{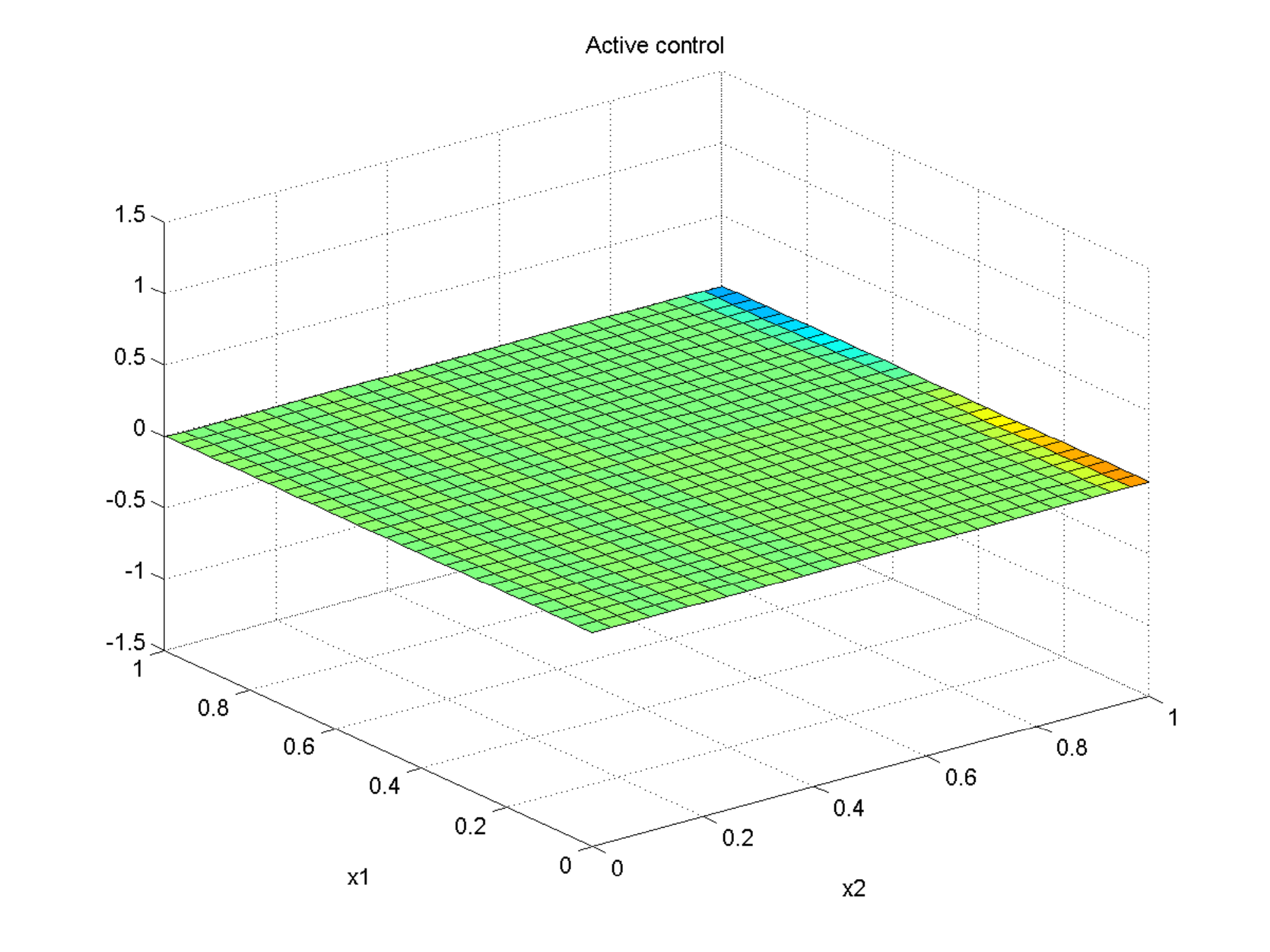}
\includegraphics[width=0.95\columnwidth]{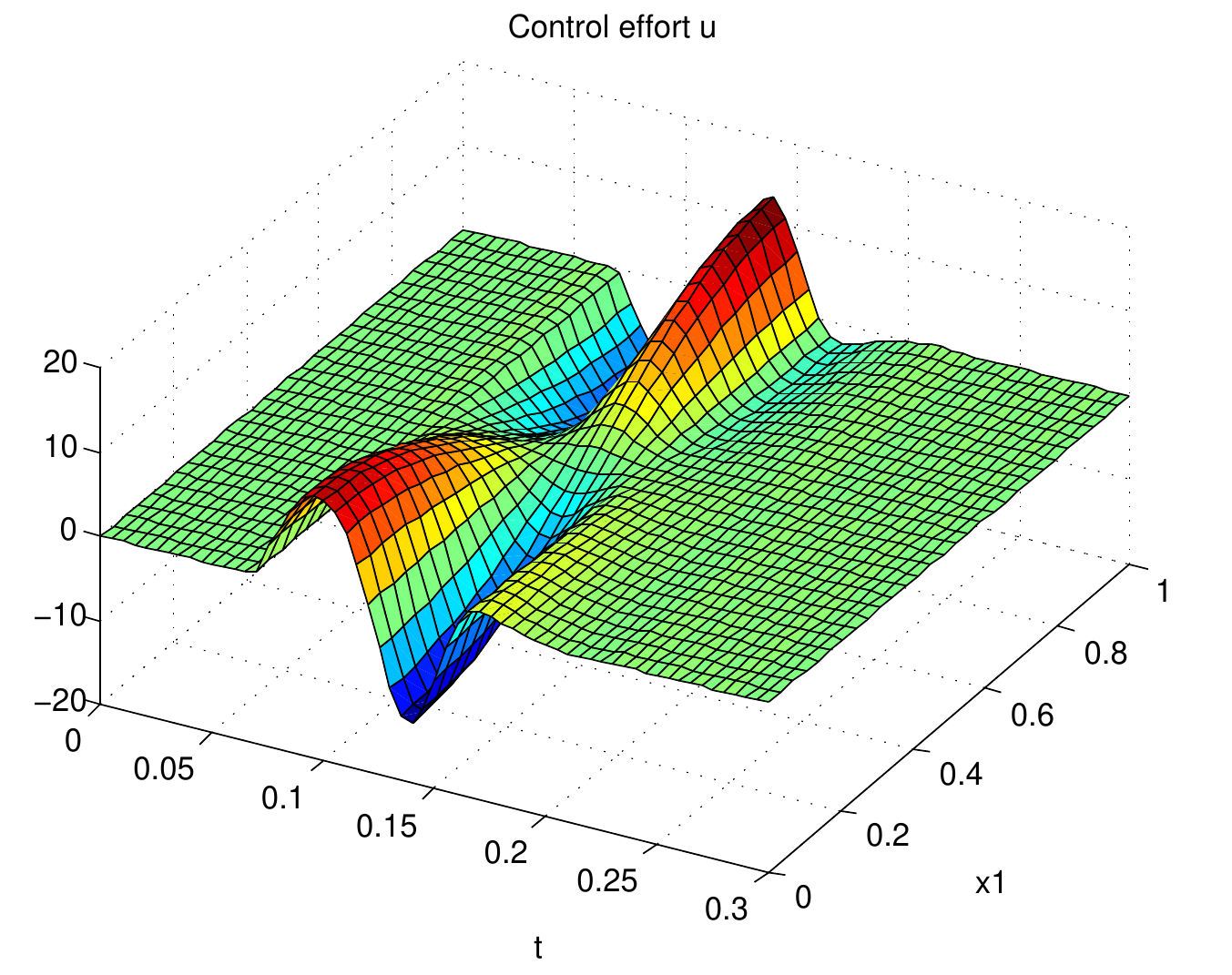}




\bibliographystyle{phmIEEEtran}
\bibliography{cdc}

%
%
%
%

\end{document}